%% file: main.tex
\documentclass[]{aero}
\usepackage{textcomp}
\usepackage[acronym]{glossaries}
\usepackage{algorithm}
\usepackage{algpseudocode}

\usepackage{amsthm}
\usepackage{amsmath}
\usepackage[table]{xcolor}
\usepackage{booktabs}
\usepackage{makecell}
\newtheorem{remark}{Remark}
\newtheorem{proposition}{Proposition}
\input{glossary.tex}
\hypersetup{colorlinks=true, linkcolor=black, citecolor=black, urlcolor=black}

\usepackage{graphicx}
\usepackage{longtable}
\usepackage{amsmath}
\usepackage[version=4]{mhchem}
\usepackage{siunitx}
\usepackage{longtable,tabularx}
\newacronym{kkt}{KKT}{Karush-Kuhn-Tucker}
\newacronym{licq}{LICQ}{linear independence constraint qualification}
\newacronym{gtoc}{GTOC}{Global Trajectory Optimisation Competition}

\ADsetup{
title    = {Adaptive Node Distribution via Probability Density Functions for Birkhoff Pseudospectral Trajectory Optimisation},
author   = {Andrian A. Yasintus \cor{}, and Xiaofeng Wu},
address  = {
  The University of Sydney, Sydney, NSW 2006, Australia
},
email    = {andrian.yasintus@sydney.edu.au},
abstract = {
  Pseudospectral methods are a powerful framework for spacecraft trajectory optimisation that rely on fixed, problem-independent collocation nodes such as Legendre-Gauss-Lobatto (LGL) nodes. These nodes are located according to polynomial root distributions, which can result in insufficient resolution in regions of high dynamical activity, such as those encountered during collision avoidance manoeuvres. This paper presents an adaptive node distribution strategy for Birkhoff pseudospectral trajectory optimisation, where the collocation nodes are placed according to a probability density function derived from the thrust magnitude profile of the spacecraft. By concentrating nodes in regions of high dynamical variation, the proposed method simultaneously achieves feasibility preservation and improved convergence with fewer nodes in a collision avoidance case study when compared to the universal Birkhoff method, with Model Predictive Control serving as an independent validation comparator. The case study highlights the practical consequences of inter-node constraint violations and validates the adaptive node scheme as a robust and efficient alternative for fuel-optimal spacecraft manoeuvre design.
  },
keywords = {
  Trajectory Optimisation \sep Pseudospectral Methods \sep Optimal Control \sep Spacecraft Guidance \\
},
}

\begin{document}

\maketitle  

\newpage

\section{Introduction}
Spacecraft collision avoidance manoeuvres are one of the most critical operational challenges in trajectory optimisation. The growing population of orbital debris and active satellites in low Earth orbit has significantly increased the frequency of conjunctions, posing a growing threat to operational spacecraft. Therefore, the fuel cost and mission disruption associated with these manoeuvres motivates the development of efficient, reliable, and robust trajectory optimisation methods capable of producing feasible solutions under tight operational and computational constraints. A broad range of methods have been developed to address this challenge, spanning impulsive and continuous-thrust formulations, deterministic and chance-constrained approaches, and single- and multi-encounter scenarios \cite{cammary}. 

Collision avoidance manoeuvres, and spacecraft path planning more broadly, are naturally posed as continuous-time optimal control problems, with the objective of minimising fuel consumption while satisfying dynamic constraints, control bounds, and a spatial keep-out constraint centered on the debris object. Two broad classes of solution method have been employed: indirect methods, which rely on satisfying equations derived from Pontryagin's Minimum Principle such as primer vector theory \cite{lawden}, or direct methods, which transcribe the continuous optimal control problem into a finite-dimensional nonlinear programming (NLP) problem to be solved by a numerical optimiser \cite{betts}. Pseudospectral methods are a distinguished class of direct transcription that approximates states and controls using global orthogonal polynomials evaluated at predetermined quadrature nodes \cite{garg}. Unlike conventional direct methods, pseudospectral methods maintain a connection to necessary conditions through the Covector Mapping Principle (CMP) \cite{rossfahroo_cmp}, providing theoretical grounding for the optimality of the transcribed problem and enabling the recovery of states, controls, and costate variables with spectral accuracy \cite{rossfahroo_costate}, allowing independent verification of optimality through theories like primer vector theory.

Pseudospectral methods have also been continuously improved for higher accuracy and lower computational effort. For example, $hp$-adaptive methods  refine the solution by iteratively partitioning the time domain into subintervals, and adaptively increasing either the number of subintervals ($h$-refinement) or the polynomial degree within each subinterval ($p$-refinement) based on local error estimates, as implemented in solvers such as GPOPS-II \cite{darby, gpops2}. This error-estimation approach remains an active area of development, with recent work refining the mesh adaptation using explicit simulation based error metrics \cite{haman}. However, while effective, the $hp$ approach is inherently reactive as nodes are redistributed in response to observed solution error rather than informed by prior knowledge of the solution structure.

The Birkhoff pseudospectral method is a distinct formulation grounded in Birkhoff interpolation theory \cite{lorentz} which permits interpolation over nodes where only non-consecutive derivative information is available. When applied to Gauss-type quadrature nodes, this gives a set of well-conditioned polynomial basis functions from which the pseudospectral integration matrix (PSIM) can be derived \cite{wang}. The PSIM is the discrete inverse of the pseudospectral differentiation matrix (PSDM), whose ill-conditioning is a well-known limitation of traditional Lagrangian pseudospectral methods \cite{xiao}. This foundation was developed into a complete pseudospectral framework for optimal control, with full compatibility with the CMP while also showing that the Birkhoff formulation supports arbitrary node distributions while retaining the theoretical properties of classical pseudospectral methods including spectral accuracy and costate recovery \cite{ross_birkhoff_part1, ross_birkhoff_part2}. The framework has since found many applications, such as two-stage ascent optimisation \cite{ascent_traj}.

Despite these favourable properties, to the authors' knowledge, the Birkhoff pseudospectral method has not previously been applied with problem-informed adaptive node placement. In collision avoidance problems, dynamic activity is concentrated near the avoidance manoeuvre interval, yet standard node distributions are unable to adapt for this structure. This paper proposes an adaptive Birkhoff pseudospectral method in which collocation nodes are placed according to a probability density function (PDF) derived from the problem structure, with concentrated resolution in regions of high dynamic activity and requires no iterative mesh refinement or error estimation. By exploiting Birkhoff support for arbitrary nodes, the proposed method aims to produce feasible collision avoidance trajectories that satisfy constraints at all points throughout the manoeuvre (not merely at collocation points), addressing a limitation that remains an active concern even in recent constraint-handling work for direct collocation methods \cite{li}.

The remainder of the paper is organised as follows. Section 2 presents the methodology, including the universal Birkhoff pseudospectral formulation, the proposed adaptive node placement transformation, and the resulting changes to the dynamics constraints and cost functional under the new mapping. Section 3 presents the results and discussion, including the collision avoidance simulation outcomes, the limitations of refining the collocation scheme, and comparing the adaptive method against a Model Predictive Control (MPC) baseline. Finally, the conclusions are presented in Section 4.

\section{Methodology}
\subsection{Birkhoff Pseudospectral Formulation}
Consider the fuel-efficient, continuous-thrust flyby optimal control problem:
\begin{equation}\label{eq:opt_eqn}
\begin{aligned}
& \underset{\mathbf{u}}{\text{min}}
& J &= \int_{t_0}^{t_f} ||\mathbf{u}|| dt \\
& \text{subject to}
& \dot{\mathbf{x}} &= f(\mathbf{x,u}) =\begin{bmatrix}
    \mathbf{v} \\
    -\frac{\mu}{||\mathbf{r}||^3}\mathbf{r}+\mathbf{u}
\end{bmatrix}\\
& & \mathbf{r}(t_f) &= \mathbf{r}_\text{target}(t_f)\\
& & ||\mathbf{u}|| &\leq u_\text{max}
\end{aligned}
\end{equation}
where $\mathbf{x} =\mathbf{x}(t) =  [\mathbf{r}(t), \mathbf{v}(t)]^T$ is the state vector (consisting of position $\mathbf{r}$ and velocity $\mathbf{v}$) of the spacecraft, $\mathbf{u} = \mathbf{u}(t)$ is the control thrust vector of the spacecraft, $\mu$ is the standard gravitational parameter, $\mathbf{r}_\text{target}(t_f)$ is the position vector of the target at the rendezvous time $t_f$, and $u_\text{max}$ is the maximum allowable thrust magnitude. 

To solve this problem, we adopt the Birkhoff pseudospectral method as the trajectory transcription scheme, using the universal Birkhoff theory of Ross, Proulx and Borges \cite{birkhoffuniversal} which has been used in implementations for fast trajectory optimisation \cite{ross_birkhoff_part1, ross_birkhoff_part2}. In their method, the trajectory and control are discretized using a collocation method such as Legendre-Gauss-Lobatto (LGL), denoted by $\tau$. We use the LGL collocation scheme here as among the Legendre pseudospectral node distributions, LGL nodes are the most appropriate for finite-horizon optimal control problems with arbitrary boundary conditions such as the problem at hand \cite{rossfahroo_lgl}. The resulting grid consists of $N+1$ nodes formed by the roots of $(1-x^2)P_{N}'(x)$, where $P_N'(x)$ is the derivative of the $N$th Legendre polynomial. Each node has a state $X_k=[R_k, V_k]^T$ and control $U_k$ that can be used for interpolation later to recover the full trajectory and control history. To relate the LGL nodes $\tau$ to physical time $t$, an affine transformation is used:
\begin{align}\label{eq:affine}
    t_k = \frac{t_f-t_0}{2}\tau_k + \frac{t_f+t_0}{2}
\end{align}
Note that in this implementation, the target rendezvous time $t_f$ is left free for the optimiser to determine. This affine transformation therefore acts as a simple scaling relationship between $[-1, 1]$ and $[0, t_f]$ that preserves the original proportion of node spacing.

Now, the general Birkhoff formulation dictates the evolution of the states according to \cite{birkhoffuniversal}:
\begin{align}
    \mathbf{X}&=\mathbf{X_0 1}^T +\mathbf{BV} \\
    \mathbf{V} &=f(\mathbf{X}(\tau), \mathbf{U}(\tau))
\end{align}
where $\mathbf{B}$ is the Birkhoff matrix (the matrix of values of the Birkhoff basis functions evaluated at every node of the grid) and $\mathbf{V}$ is an $(N+1)\times6$ matrix of the dynamics at each node in collocation space. However, because the dynamics in the problem are expressed with respect to physical time $t$ rather than the collocation variable $\tau$, this introduces a scaling factor of $\frac{t_f-t_0}{2}$ to the general Birkhoff formulation due to the chain rule:
\begin{align}
    \frac{\mathrm{d}\mathbf{x}}{\mathrm{d}\tau} = \frac{\mathrm{d}t}{\mathrm{d}\tau}\frac{\mathrm{d}\mathbf{x}}{\mathrm{d}t}=\frac{t_f - t_0}{2}f(\mathbf{x,u})
\end{align}
The universal Birkhoff theory then allows us to transcribe \autoref{eq:opt_eqn} into the following NLP problem:
\begin{equation}\label{eq:opt_birkhoff}
\begin{aligned}
& \underset{\mathbf{u}}{\text{min}}
& & J = \frac{t_f-t_0}{2}\sum_{k=0}^N w_k||\mathbf{u}_k|| \\
& \text{subject to}
&\mathbf{X} &= \mathbf{X_0}\mathbf{1}^T+\frac{t_f-t_0}{2}\mathbf{BF}\\
& & R_{N} &= \mathbf{r}_{target}(t_f)\\
& & ||U_k|| &\leq u_{max}
\end{aligned}
\end{equation}
where $\mathbf{F}=f(\mathbf{X}(t), \mathbf{U}(t))$ is an $(N+1)\times6$ matrix of the dynamics at each node in physical time, and $w_k$ is the quadrature weighting of the $k$th node, given by:
\begin{align}
    w_k = \frac{2}{N(N+1)[P_{N}(x_k)]^2}
\end{align}
A step-by-step process to calculating $\mathbf{B}$ can be found in \cite{birkhoffuniversal}. 

However, this spacing is not suitable for all types of problems. Regions that exhibit rapid state variation may require a higher local node density to adequately resolve the dynamics with enough resolution. In manoeuvres such as debris collision avoidance manoeuvres, it may be desirable to cluster nodes near critical events, rather than using the symmetric distribution imposed by the standard LGL collocation scheme.

\subsection{Adaptive Node Placement via PDF}
Without loss of generality, $t_0=0$ is assumed throughout the remainder of this work. To allow LGL nodes to be dynamically clustered, the nodes will be clustered according to a normalized PDF $f_{X}=f_X(s;\mu_\text{PDF}, \sigma, \alpha)$ satisfying:
\begin{align}
    f_X(s)\geq 0, \hspace{2cm} \int_0^1 f_X(s) ds=1
\end{align}
which the user can design and tune using the mean $\mu_\text{PDF}\in[0, 1]$, standard deviation $\sigma$ and weighting $\alpha$. The weighting term $\alpha$ is only used if the user designs $f_X$ as a multimodal function of component densities $f_{X,1}, f_{X,2},...$, in which case $f_X=\sum_i \alpha_i f_{X,i}$ with $\sum_i \alpha_i = 1$ ensuring $f_X$ remains a valid normalized PDF. However, a single Gaussian suffices for the single-debris scenario considered in this work.

\begin{remark}
In practice, the Gaussian PDF $f_X$ is truncated to the domain $[0,1]$ and re-normalized such that $\int_0^1 f_X(s)\,ds = 1$. Additionally, a uniform floor $\varepsilon$ is blended into the distribution:
\begin{align}
    f_X(s) \leftarrow (1-\varepsilon)\,f_X(s) + \varepsilon
\end{align}
followed by re-normalization. This prevents $f_X(s) \approx 0$ near the boundaries of $[0,1]$, which would otherwise cause $\phi'(\tau) = \frac{t_f}{2f_X(t_{norm})}$ (see \autoref{eq:deriv}) to become arbitrarily large and introduce numerical ill-conditioning into the NLP. In this work, $\varepsilon = 0.05$ is used.
\end{remark}
Now, because the transformation from LGL nodes $\tau$ to physical time $t$ using a PDF is now non-linear, the transformation function changes from a simple affine transformation. We define the following intermediate values:
\begin{itemize}
    \item $\tau_\text{norm}$: These are the LGL nodes transformed from $[-1,1]$ to $[0,1]$ using an affine transformation:
    \begin{align}
        \tau_\text{norm} = \frac{\tau+1}{2}
    \end{align}
    \item $t_\text{norm}$: These are the final clustered time coordinates in the domain $[0, 1]$ such that:
    \begin{align}
        t = t_\text{norm}\cdot t_f
    \end{align}
\end{itemize}
We define $\tau_\text{norm}$ and $t_\text{norm}$ as intermediate steps as the mapping to be applied uses a cumulative distribution function (CDF), which has domain $[0, 1]$ and range $[0, 1]$. Additionally, because $t_f$ is free for the optimiser to solve for, the introduction of $t_\text{norm}$ decouples the node distribution from the final mission duration. To actually carry out the mapping between $\tau_\text{norm}$ and $t_\text{norm}$, define the CDF $F_X$:
\begin{align}
    F_X(s) = \int_{0}^s f_X(\zeta) d\zeta
\end{align}
The key here is that if $s=t_\text{norm}$ and $F_X(s)=\tau_\text{norm}$, then we can construct a mapping $t_\text{norm} = F_X^{-1}(\tau_\text{norm})$ which exactly follows the clustering behaviour of the PDF $f_X$. This is because when $f_X$ increases, this directly corresponds to a larger derivative in the CDF, causing the CDF to increase rapidly. Therefore, regions where $f_X$ attains larger values correspond to smaller increments in $t_\text{norm}$ for a fixed increment in $\tau_\text{norm}$, which is exactly the desired clustering effect. This is demonstrated in \autoref{fig:cdf_inv}, which shows an example of the remapping process using the inverse CDF from a PDF with a peak at $t_{norm}=0.3$ and a standard deviation of 0.05. Notice that the nodes cluster near $t_{norm}=0.3$, as desired, with the standard deviation determining the degree of clustering.
\begin{figure}[h!]
    \centering
    \includegraphics[width=\linewidth]{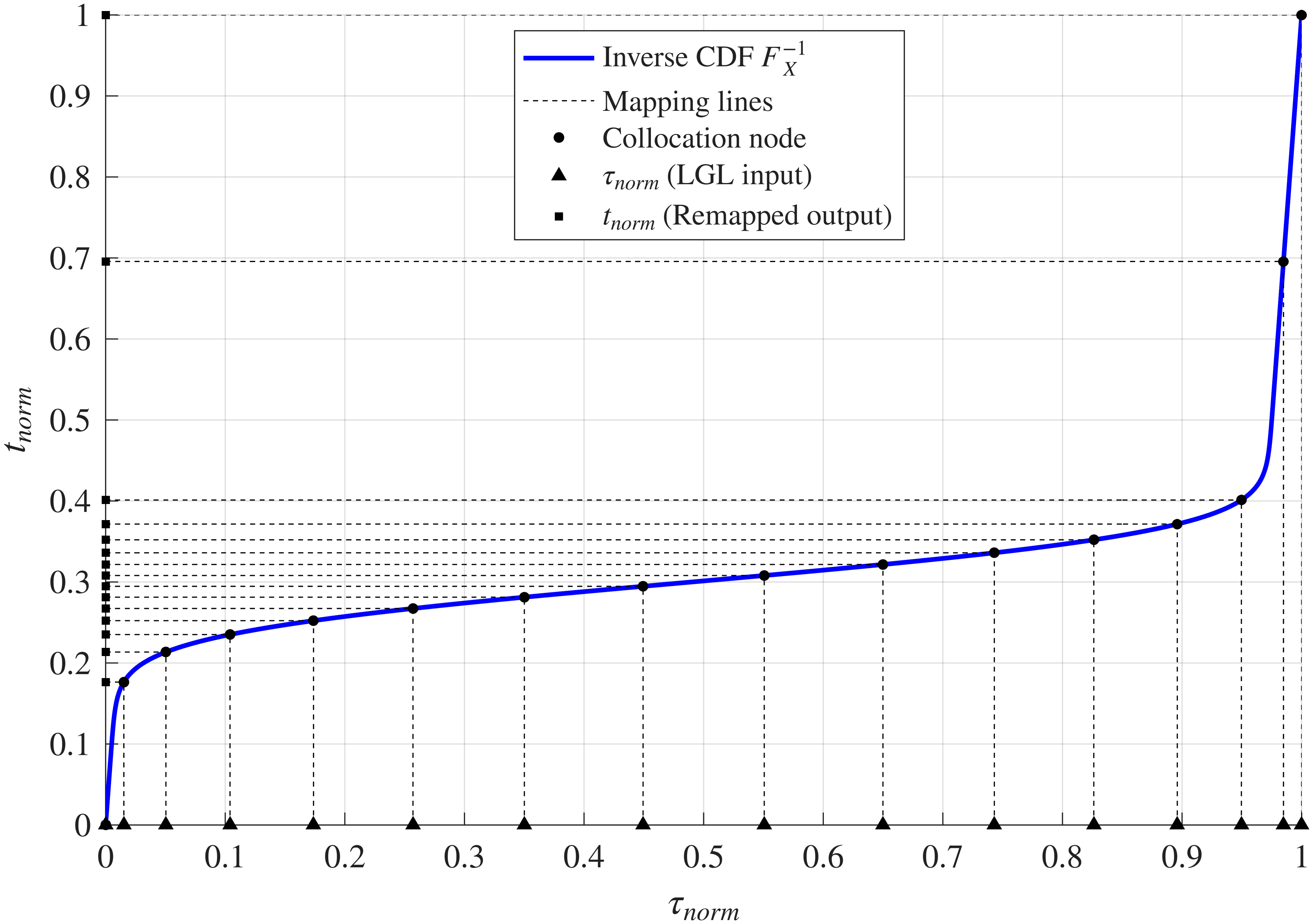}
    \caption{An example of remapping 16 LGL nodes using a PDF with a peak at $t_{norm}=0.3$ and $\sigma = 0.05$. }
    \label{fig:cdf_inv}
\end{figure}\\
In summary, the complete transformation is:
\begin{align}
    \tau \xrightarrow{\frac{\tau+1}{2}}\tau_{norm} \xrightarrow{F_X^{-1}} t_{norm} \xrightarrow{ t_f} t
\end{align}
which can be expressed by \autoref{eq:transform}. This is the \textbf{adaptive node placement transformation} equation.
\begin{align}\label{eq:transform}
    t = \phi(\tau) = t_f F_X^{-1}\left(\frac{\tau+1}{2} \right)
\end{align}
The proposed transformation preserves the monotonicity of the temporal mapping, since $f_X(s)>0$ implies $F_X$ is strictly increasing and therefore invertible. We also note that this transformation can be applied to a variety of optimisation scenarios.

\begin{proposition}
In the limit $\sigma \rightarrow \infty$, the adaptive node placement transformation \autoref{eq:transform} recovers the standard affine transformation \autoref{eq:affine}, and hence the original LGL collocation scheme.
\end{proposition}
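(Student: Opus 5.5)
The plan is to show that, as $\sigma\to\infty$, the truncated and renormalised density $f_X$ on $[0,1]$ converges uniformly to the uniform density $f_X\equiv 1$. It then follows that $F_X(s)\to s$ and $F_X^{-1}(y)\to y$ uniformly on $[0,1]$. Substituting into \autoref{eq:transform} with $t_0=0$ gives
\begin{align*}
\phi(\tau)\to t_f\,\frac{\tau+1}{2}=\frac{t_f-t_0}{2}\tau+\frac{t_f+t_0}{2},
\end{align*}
which is exactly \autoref{eq:affine}. The mapped nodes $t_k=\phi(\tau_k)$ are therefore the affinely scaled LGL nodes. Moreover, the derivative factor $\phi'(\tau)=t_f/(2f_X(t_\text{norm}))$ tends to the constant $t_f/2$. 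Hence the Birkhoff transcription \autoref{eq:opt_birkhoff}, with its chain-rule scaling and quadrature weights, is recovered as well.

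The first step is the density limit. Write the untruncated Gaussian as $g_\sigma(s)=\frac{1}{\sigma\sqrt{2\pi}}\exp\!\left(-\frac{(s-\mu_\text{PDF})^2}{2\sigma^2}\right)$. The truncated density is $g_\sigma(s)/Z_\sigma$ with $Z_\sigma=\int_0^1 g_\sigma$. The normalising constant $\frac{1}{\sigma\sqrt{2\pi}}$ cancels, so
\begin{align*}
\frac{g_\sigma(s)}{Z_\sigma}=\frac{e^{-(s-\mu_\text{PDF})^2/2\sigma^2}}{\int_0^1 e^{-(\zeta-\mu_\text{PDF})^2/2\sigma^2}\,d\zeta}.
\end{align*}
Since $|s-\mu_\text{PDF}|\le 1$ for $s,\mu_\text{PDF}\in[0,1]$, both exponentials lie in $[e^{-1/2\sigma^2},1]$. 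The ratio therefore lies in $[e^{-1/2\sigma^2},e^{1/2\sigma^2}]$, which tends to $1$ uniformly in $s$. The $\varepsilon$-floor blend $(1-\varepsilon)f_X+\varepsilon$, followed by renormalisation, maps the uniform density to itself and is continuous in the sup norm. The uniform limit is therefore preserved. For a multimodal mixture $\sum_i\alpha_i f_{X,i}$, apply the same argument to each component with its $\sigma_i\to\infty$; since $\sum_i\alpha_i=1$, the limit is again uniform.

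The second step passes from $f_X$ to $F_X^{-1}$. Uniform convergence of $f_X$ gives $\sup_s|F_X(s)-s|\le\sup_s|f_X(s)-1|\to0$. For the inverse, let $y=F_X(s)$. Then $|F_X^{-1}(y)-y|=|s-F_X(s)|$, and taking the supremum over $y\in[0,1]$ (using that $F_X$ is a bijection of $[0,1]$) gives $\sup_y|F_X^{-1}(y)-y|=\sup_s|s-F_X(s)|\to0$. This yields the node limit.

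I expect the main subtlety to be this inversion step together with the claim about the derivative. Convergence of the nodes only needs the sup-norm argument above. Recovering the full scaling factor $\phi'(\tau)\to t_f/2$ additionally needs $f_X$ bounded away from zero. That holds here: the ratio bound above gives $f_X\ge e^{-1/2\sigma^2}$, and the $\varepsilon$-floor also guarantees it. I would state this explicitly, so that the $\frac{t_f-t_0}{2}$ factor in \autoref{eq:opt_birkhoff} is seen to be the limiting case.
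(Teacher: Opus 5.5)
Your proposal is correct and takes the same route as the paper: the truncated, renormalised Gaussian tends uniformly to $1$ on $[0,1]$, so $F_X$ and $F_X^{-1}$ tend to the identity and \eqref{eq:transform} reduces to \eqref{eq:affine} with $t_0=0$. Your version is more careful than the paper's in three places: the paper passes through the $\sigma$-dependent, vanishing intermediate limit $\frac{1}{\sigma\sqrt{2\pi}}$ and then simply sets $F_X(s)=s$, whereas your explicit bound $[e^{-1/2\sigma^2},e^{1/2\sigma^2}]$ and your sup-norm argument for $F_X^{-1}$ justify each limit, and you also address the $\varepsilon$-floor and $\phi'(\tau)\to t_f/2$, which the paper's proof leaves implicit.
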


\begin{proof}
As $\sigma \rightarrow \infty$, the Gaussian PDF becomes:
\begin{align}
    f_X(s) = \frac{1}{\sigma \sqrt{2\pi}}\exp{\left( \frac{-(s-\mu_\text{PDF})^2}{2\sigma^2}\right)}\rightarrow \frac{1}{\sigma \sqrt{2\pi}}
\end{align}
since the exponent $\frac{-(s-\mu_\text{PDF})^2}{2\sigma^2}\rightarrow0$ for any fixed $s$. After truncation and re-normalization to $[0, 1]$:
\begin{align}
    f_X(s) = \frac{1/\sigma\sqrt{2\pi}}{\int_0^1 1/\sigma\sqrt{2\pi}\,ds} = 1
\end{align}
So $f_X(s)\rightarrow 1$ uniformly on $[0, 1]$. The corresponding CDF is $F_X(s) = s$, and substituting into 
\eqref{eq:transform}:
\begin{align}
    \phi(\tau) = t_f F_X^{-1}\left(\frac{\tau+1}{2}\right) = 
    t_f\cdot\frac{\tau+1}{2}
\end{align}
which is the affine transformation \autoref{eq:affine} with $t_0=0$.
\end{proof}

\subsection{Changes to Birkhoff Approach Under New Mapping}
Under the new mapping, several constants in \autoref{eq:opt_birkhoff} are changed. Firstly, differentiate \autoref{eq:transform} to get:
\begin{align}\label{eq:deriv}
    \frac{\mathrm{d}t}{\mathrm{d}\tau} = \phi'(\tau) =  \frac{t_f}{2f_X(t_\text{norm})}
\end{align}
where we have used the inverse function theorem:
\begin{align}
    \frac{\mathrm{d}}{\mathrm{d}s}F_X^{-1}(s) = \frac{1}{F_X'(F_X^{-1}(s))} = \frac{1}{f_X(F_X^{-1}(s))}
\end{align}
Then applying the chain rule to the dynamics constraint gives:
\begin{align}
    X(\tau_k) &= X(\tau_0) + \int_{0}^{t_k} F \, dt \nonumber \\
              &= X(\tau_0) + t_f \int_{\tau_0}^{\tau_k} F \cdot \frac{1}{2f_X(t_\text{norm})} \, d\tau
\end{align}
By quadrature rules, this can be rewritten as:
\begin{align}\label{eq:birkhoff constraint}
    X(\tau_k) = X(\tau_0) + t_f \sum_{j=0}^NB_{kj} \frac
    {1}{2f_X(t_{\text{norm}, j})}F_j
\end{align}
or alternatively, in matrix form:
\begin{align}
    \mathbf{X} = \mathbf{1}\mathbf{X}_0 + t_f\,\mathbf{B} \,\text{diag}\left( \frac{1}{2f_X(t_\text{norm})}\right)\mathbf{F}
\end{align}
where $\mathbf{X},\mathbf{X}_0\in\mathbb{R}^{6\times(N+1)}$ and $\text{diag}\left( \frac{1}{2f_X(t_\text{norm})}\right)\in\mathbb{R}^{(N+1)\times(N+1)}$ is a diagonal scaling matrix that scales each row of $\mathbf{F}$ by $\phi'(\tau)$.
For the cost function, the adaptive node placement changes the quadrature to:
\begin{align}
    J &= \int_0^{t_f}||\mathbf{u}(t)|| dt \nonumber \\
    &=\int_{-1}^1 ||\mathbf{u}(t)||\phi'(\tau) d\tau \nonumber \\
    &= \sum_{k=0}^N w_k||\mathbf{u}(t_k)|| \frac{t_f}{2f_X(t_{\text{norm},k})}\label{eq:cost_new}
\end{align}
Thus, \autoref{eq:birkhoff constraint} and \autoref{eq:cost_new} are the adaptive-node equivalent of the dynamics constraints and cost function of \autoref{eq:opt_birkhoff} respectively.

\subsection{Collision Avoidance Manoeuvre Pipeline}
With the adaptive node placement algorithm, we are now in a position to design an autonomous collision avoidance system. We denote $\mathbf{x}$ the spacecraft state, $\mathbf{x_t}$ the target state, and $\mathbf{x_d}$ a debris state, from which the spacecraft must avoid on its way to the target. We also define $\rho_\text{det}$ as the detection range of the satellite, and $\rho_\text{KO}$ as the keep-out range where no debris are allowed to enter. 

The aim of this algorithm is to determine a nominal path so that $\mathbf{x}$ will rendezvous with $\mathbf{x_t}$, and replan accordingly if it detects and determines that $\mathbf{x_d}$ will enter the keep-out sphere. We will denote the nominal path calculation as Phase 1, and the replanning calculation as Phase 2. However, notice that if a debris object is detected, there is not yet enough information to determine where to cluster the nodes. Let $\mathbf{x_\text{det}}$ be the position of the spacecraft when it detects the debris. Hence, the replanning stage runs two solvers:
\begin{enumerate}
    \item Phase 2a: A universal Birkhoff solver will solve for a replanned path from $\mathbf{x_\text{det}}$ to $\mathbf{x_t}$, returning states and importantly, control history $\mathbf{u}$.
    \item Phase 2b: Next, the location of $||\mathbf{u}||_\text{max}$ in the temporal dimension is determined, and nodes are clustered at that timepoint. The adaptive solver is then run to obtain the replanned path.
\end{enumerate}
The algorithm is described in \hyperref[algo:adaptive_birkhoff]{Algorithm 1}.

\begin{algorithm}[h!]\label{algo:adaptive_birkhoff}
\caption{Adaptive Birkhoff Pseudospectral Collision Avoidance Pipeline}
\begin{algorithmic}[1]

\Require Initial spacecraft state $\mathbf{x_0}$, target state $\mathbf{x}_t$, debris state $\mathbf{x}_d(t)$, 
         detection range $\rho_\text{det}$, keep-out radius $\rho_\text{KO}$, 
         number of nodes $N$, thrust limit $u_\text{max}$
\Ensure  Fuel-optimal trajectory $\mathbf{x}(t)$ satisfying keep-out constraint

\Statex
\Statex
\textbf{Phase 1: Nominal Trajectory}
\State Compute LGL nodes $\{\tau_k\}_{k=0}^N$ and apply affine transformation \eqref{eq:affine}
\State Solve Birkhoff NLP \eqref{eq:opt_birkhoff} for nominal trajectory $\mathbf{x}^*(t)$, control $\mathbf{u}^*(t)$ and rendezvous time $t^*$
\State Propagate spacecraft state $\mathbf{x}(t)$ and debris state $\mathbf{x}_d(t)$ forward
\Statex
\Statex
\textbf{Phase 2: Conjunction Detection and Replanning}
\If{$\|\mathbf{r}(t) - \mathbf{r}_d(t)\| \leq \rho_\text{det}$ for any $t \in [t_0, t_f]$}
    \State Identify time of debris detection $t_\text{det}$
    \Statex
    \Statex
    \textbf{Phase 2a: Flat-PDF Replan}
    \State Set $f_X(s) = 1$ (uniform distribution, recovers standard LGL)
    \State Apply transformation \eqref{eq:transform} and solve modified Birkhoff NLP \eqref{eq:birkhoff constraint} subject to:
    \begin{equation*}
        \|\mathbf{r}_k - \mathbf{r}_d(t_k)\| \geq \rho_\text{KO}, \quad k = 0, \ldots, N
    \end{equation*}
    \Statex
    \textbf{Phase 2b: Adaptive-PDF Replan}
    \State Identify peak thrust time $t_{peak} = \arg\max_t \|\mathbf{u}^*_{2a}(t)\|$ from Phase 2a solution
\State Set $\mu_\text{PDF} = t_{peak}/t^*$, construct Gaussian PDF $f_X(s; \mu_\text{PDF}, \sigma)$
    \State Apply transformation \eqref{eq:transform} and solve modified Birkhoff NLP \eqref{eq:birkhoff constraint} subject to:
    \begin{align*}
        \|\mathbf{r}_k - \mathbf{r}_d(t_k)\| &\geq \rho_\text{KO}, \quad k = 0, \ldots, N\\
        \text{time of flight } t_\text{replan} &= t^* - t_\text{det}
    \end{align*}
    \State Follow replanned trajectory $\mathbf{x}^*(t)$ to $t_f$
\Else
    \State Follow nominal trajectory $\mathbf{x}^*(t)$ to $t_f$
\EndIf

\end{algorithmic}
\end{algorithm}

\newpage
\section{Results and Discussions}
The adaptive node pipeline was applied with the initial states and variables of \autoref{tab:init_cond}. These initial states are based on the 9th Global Trajectory Optimisation Competition (GTOC) \cite{gtoc} which provides a large catalogue of low-Earth orbit (LEO) debris objects with realistic orbital parameters, and hence offers a physically representative test environment for the proposed method. The detection range $\rho_\text{det}$ and keep-out radius $\rho_\text{KO}$ are selected such that the conjunction is triggered and the keep-out constraint is active for the given scenario, allowing the performance of the proposed method to be meaningfully evaluated.

While these values exceed typical operational thresholds, which are on the order of kilometres for keep-out zones and reported miss distances \cite{detection_range, detection2}, they are selected to ensure a well-defined conjunction geometry for the purposes of demonstrating the adaptive node placement framework. All simulations were run using MATLAB 2025b on a 2.20 GHz Intel Core Ultra 7 265H 64-bit CPU with 32 GB RAM, and the NLPs were solved using IPOPT (Interior Point optimiser) \cite{ipopt}, interfaced through CasADi \cite{casadi} for automatic differentiation and problem formulation. The initial conditions for simulation are shown in \autoref{tab:init_cond}.

\begin{table}[h!]
\centering
\caption{Initial conditions for simulation (Earth-Centered Inertial Frame)}
\rowcolors{2}{gray!15}{white}
\begin{tabular}{lccc}
\hline
\textbf{Parameter} & $x$ & $y$ & $z$ \\
\hline
$\mathbf{r}_0$ (km) & 136.094 & -875.503 & -7078.351 \\
$\mathbf{v}_0$ (km/s) & -7.321 & 1.574 & -0.0643 \\
$\mathbf{r}_{d_0}$ (km) & -5637.474 & 1247.190 & 4123.303 \\
$\mathbf{v}_{d_0}$ (km/s) & -5.604 & 0.698 & -3.654 \\
$\mathbf{r}_{t_0}$ (km) & -2449.412 & 3986.821 & -5440.310 \\
$\mathbf{v}_{t_0}$ (km/s) & 4.226 & -3.883 & -4.743 \\
\hline
\textbf{Parameter} & & & \\
\hline
$\rho_\text{det}$ (km) & & 1000 & \\
$\rho_\text{KO}$ (km) & & 300 & \\
\hline
\end{tabular}
\label{tab:init_cond}
\end{table}

\subsection{Collision Avoidance Simulation Results}
All phases of the simulation (including the adaptive-node phase) were run with 50 LGL nodes, which were distributed as shown in \autoref{fig:node_dist}. The fuel cost $\Delta v$ was then calculated for each portion of the trajectory, and the key results are shown in \autoref{tab:results_main}.

\begin{figure}[h!]
    \centering
    \includegraphics[width=0.8\linewidth]{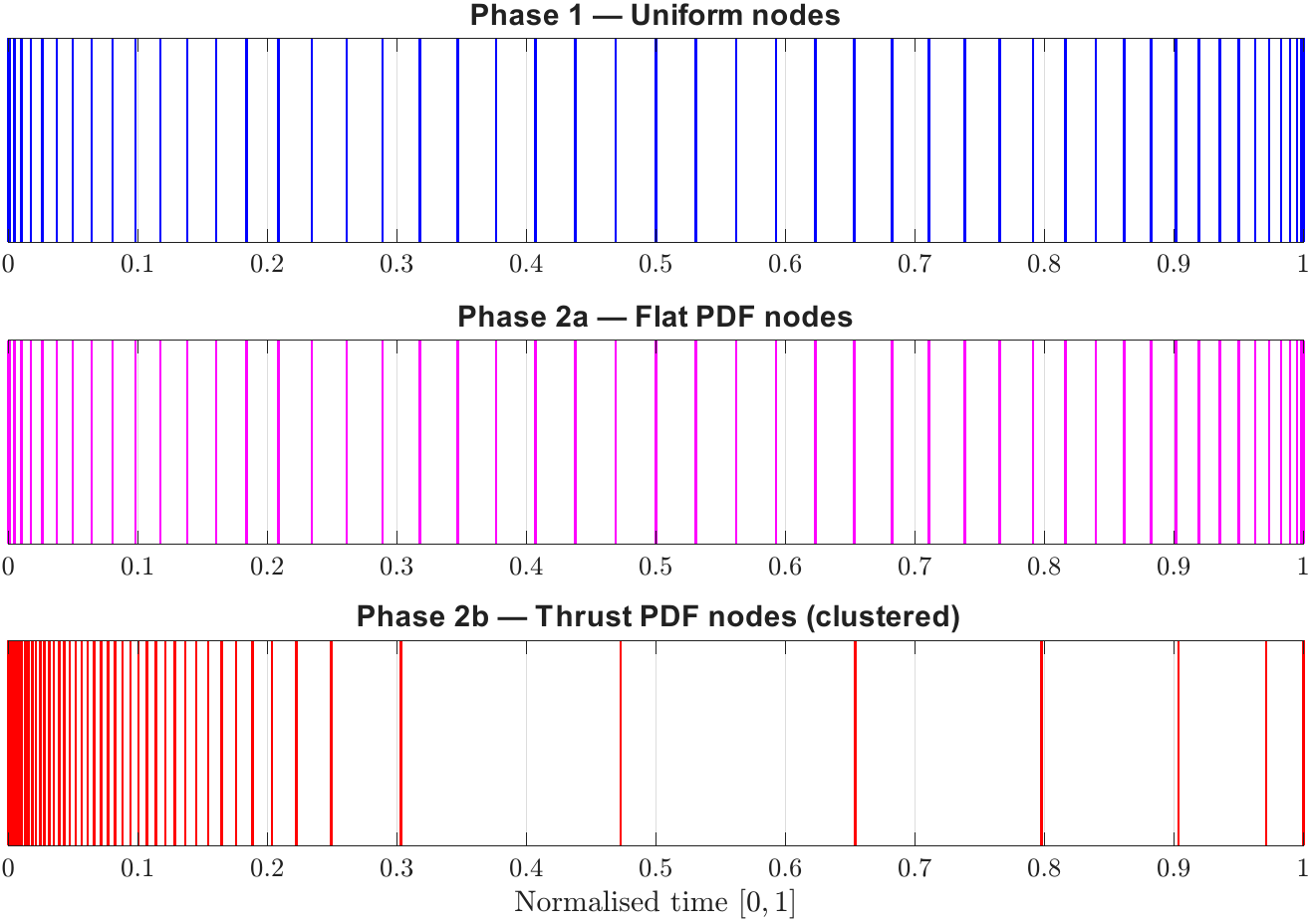}
    \caption{The node distributions for each phase of the algorithm}
    \label{fig:node_dist}
\end{figure}

\begin{table}[h!]
\centering
\caption{Results from the collision avoidance simulation}
\begin{tabular}{lcc}
\toprule
\multicolumn{3}{c}{\textbf{Nominal Trajectory Results}} \\
\midrule
\rowcolor{gray!15}
\multicolumn{2}{l}{Final rendezvous time $t^*$ (s)} & 3697   \\
\multicolumn{2}{l}{Detection time $t_\text{det}$ (s)}    & 1225.4 \\
\rowcolor{gray!15}
\multicolumn{2}{l}{Nominal total $\Delta v$ (km/s)} & 2.596 \\
\midrule
\textbf{Fuel budget} & \textbf{$\Delta v$ (km/s)} & \textbf{Extra cost vs. nominal (km/s)} \\
\midrule
\rowcolor{gray!15}
Phase 1, before $t_\text{det}$           & 1.085 & --    \\
Phase 1, after $t_\text{det}$              & 1.511 & --    \\
\rowcolor{gray!15}
Phase 2a, flat PDF                  & 1.793 & 0.282 \\
Phase 2b, adaptive PDF              & 5.950 & 4.439 \\
\bottomrule
\end{tabular}
\label{tab:results_main}
\end{table}

At first, it looks as if the flat-PDF replanning algorithm (ie. universal Birkhoff theory) outperforms the adaptive node algorithm in terms of pure fuel consumption. However, the separation distance between the spacecraft and the debris was calculated, as shown in \autoref{fig:sep_hist}.
\begin{figure}[h!]
    \centering
    \includegraphics[width=\linewidth]{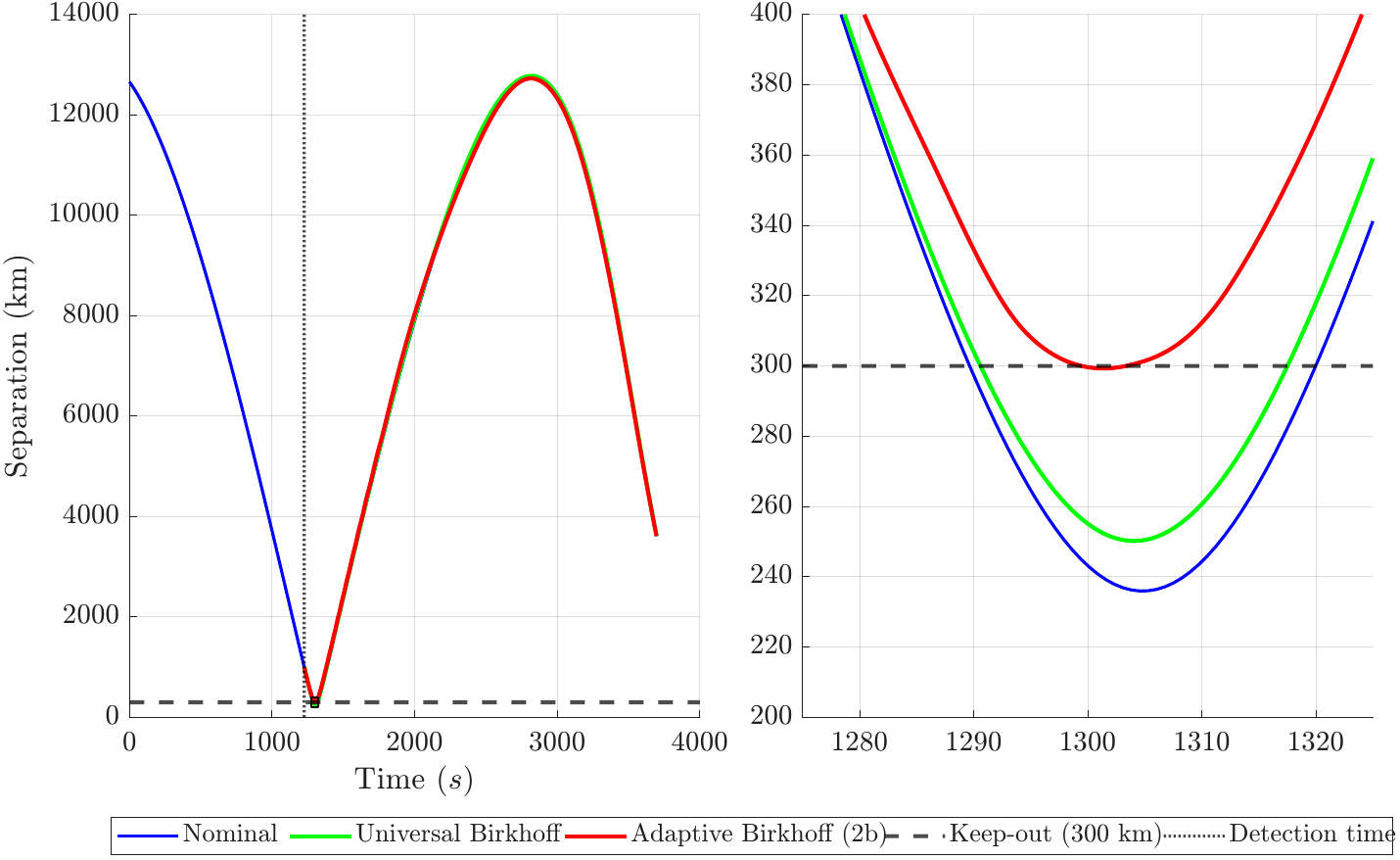}
    \caption{Left: The separation history between the spacecraft and debris for both replanning phases. Right: Zoomed-in version near the keep-out region.}
    \label{fig:sep_hist}
\end{figure}\\
\autoref{fig:sep_hist} highlights how while the nominal trajectory violates the keep-out constraints as expected, Phase 2a (replanning using the universal Birkhoff method) also violates the keep-out constraints. Both the standard and adaptive methods satisfy the keep-out constraint exactly at the discrete collocation nodes, in line with constraint enforcement in pseudospectral collocation. However, the densely interpolated reconstruction highlights a substantial difference in the violation occurring between nodes, with a $49.80$ km violation for the standard method, compared to $0.70$ km for the adaptive method - a reduction of 98.6\%. Relative to the keep-out radius of $\rho_{KO} = 300$km, this corresponds to a $16.6\%$ intrusion for the standard method, in contrast to a comparatively negligible $0.23\%$ for the adaptive method.

This disparity reflects the advantages of the concentration of nodes near the keep-out region by the adaptive method, which sharply narrows the unconstrained inter-node gap. Therefore, the adaptive node algorithm correctly identifies where resolution is needed and produces a genuinely feasible manoeuvre, while the flat-PDF produces a cheap solution that only satisfies the keep-out condition at the collocation nodes, with a substantial violation occurring between them. For debris collision avoidance, this is a significant caveat of the universal Birkhoff theory and clearly shows the advantages of having a flexible node placement.

The high $\Delta v$ of the adaptive formulation requires further examination. Given the short remaining arc of $t_{replan} = 2471.6$ seconds after detection, the optimiser must execute an aggressive early deflection manoeuvre to guarantee clearance throughout the conjunction window, which leaves very little of the arc left to recover a fuel-efficient approach to the target. This is shown as a sustained thrust saturation near the start of the replanning stage, as shown in \autoref{fig:control_mag}. 
\begin{figure}[h!]
    \centering
    \includegraphics[width=0.8\linewidth]{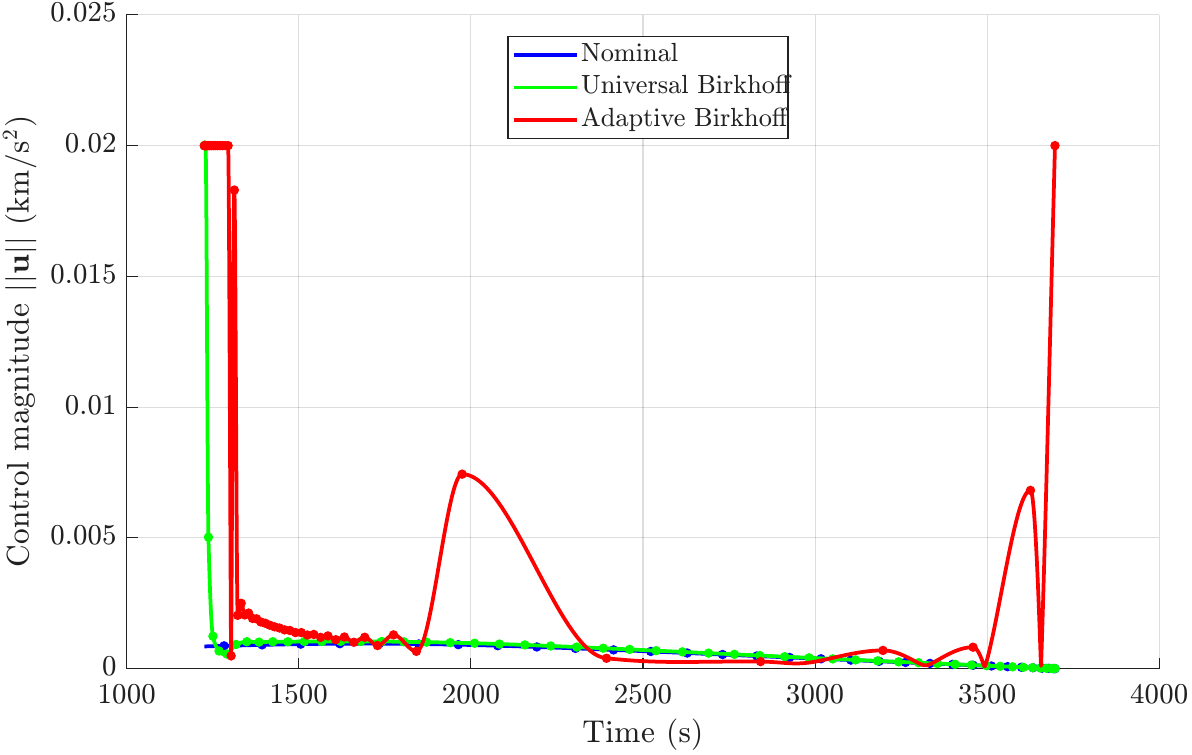}
    \caption{Comparison of the control magnitudes from each replanning algorithm. Dots mark the location of the discrete collocation nodes.}
    \label{fig:control_mag}
\end{figure}\\
A secondary saturation spike in the end of the adaptive formulation is also visible. As the PDF concentrates nodes around $t_\text{peak}$, it leaves the terminal region comparatively sparse and hence the optimiser has little resolution left to shape a smooth final approach, instead relying on a saturated control correction at the last few nodes to meet the hard terminal rendezvous constraint. This can be mitigated with a multimodal $f_X$ that adds a secondary component near $t_f$, a larger $\sigma$ value (at the cost of resolution near the detection window), or by increasing the node count (though this comes with a caveat, see \hyperref[sec:node_refinement]{Section 3.2}.) We also note that warm-starting with the universal Birkhoff solution does not change the outcome, suggesting the result is robust to initialization, and since the same solver and tolerances were used across all three stages, this value reflects a locally optimal, feasible solution under our problem formulation. The cost difference therefore reflects the price of genuine constraint satisfaction throughout the conjunction window rather than only at the collocation nodes. 

\subsection{Limitations of LGL Node Refinement} \label{sec:node_refinement}
A natural follow-up question is whether the constraint violation in Phase 2a can be resolved simply by increasing the number of collocation nodes, rather than through adaptive placement, in order to reduce the inter-node gap and hence the inter-node constraint violations. Indeed, \cite{ross_birkhoff_part2} highlighted how the universal formulation could handle 700 grid points for a challenging low-thrust orbital transfer problem. \autoref{fig:node_converge} shows the minimum separation distance as a function of node count $N$ for the flat-PDF replanning algorithm.
\begin{figure}[h!]
    \centering
    \includegraphics[width=\linewidth]{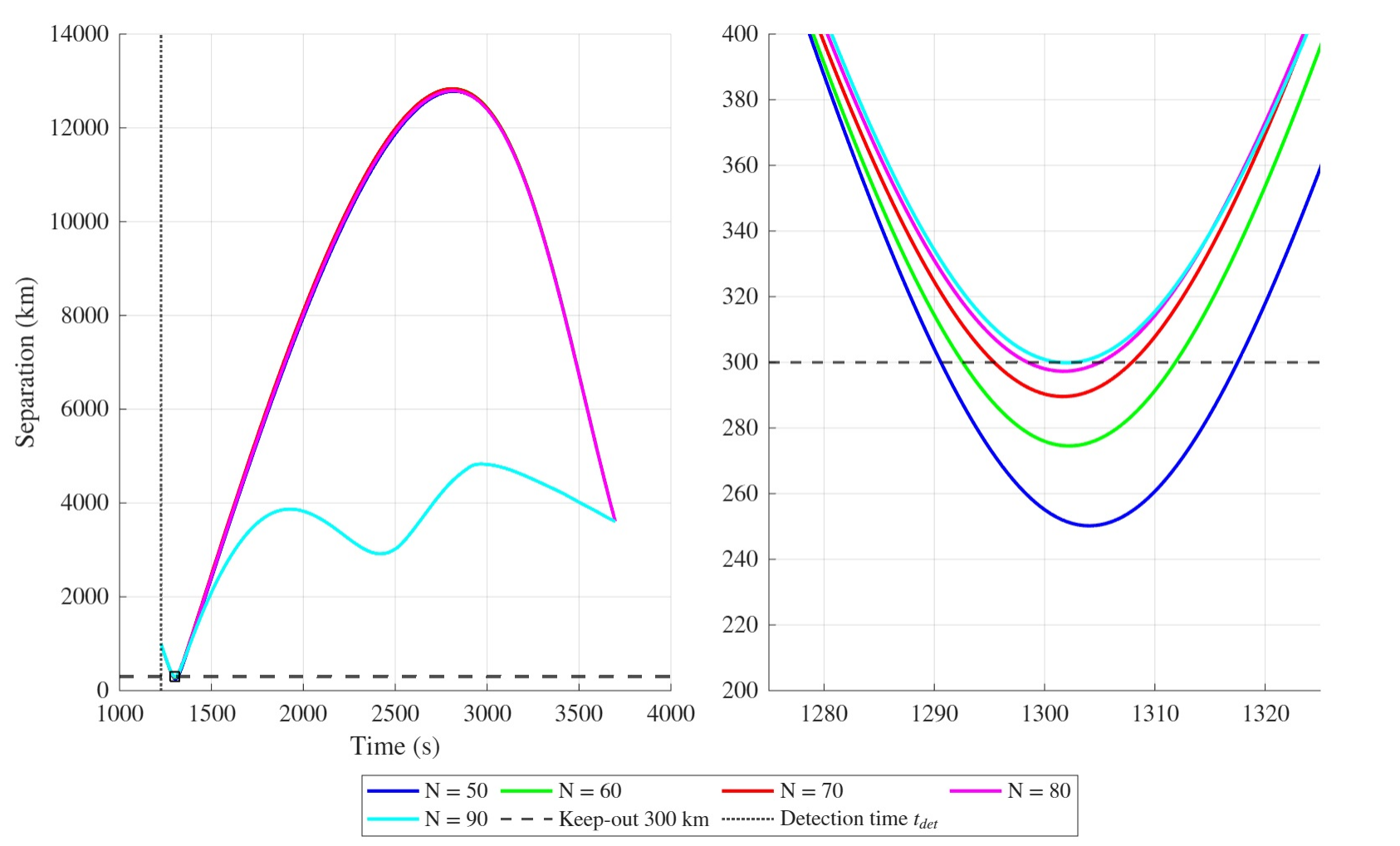}
    \caption{Left: Separation history as a function of the number of nodes $N$. Right: zoomed-in version near the keep-out region.}
    \label{fig:node_converge}
\end{figure}\\
As \autoref{fig:node_converge} shows, at $N=80$ the keep-out constraint performance has improved but still does not reach $\rho_{KO}=300$ km, and at $N=90$, the solution fails to converge within the 2000 iteration limit, producing a solution that meets the constraints, but is not optimised - at $N=90$, the fuel consumption was $\Delta v = 42.00$ km/s. Even when the maximum number of NLP iterations was increased to 10,000, an optimised result was not able to be reached, suggesting that the corresponding NLP becomes increasingly difficult to solve numerically as $N$ increases. Furthermore, warm-starting from the $N=80$ solution also did not help, and instead resulted in a worse fuel consumption of $\Delta v =50.77$ km/s. 

To investigate exactly where the breakdown occurred, a finer sweep of $N$ was run for $N=81$ to $85$. This is shown in \autoref{fig:zoomed_in_sep}.
\begin{figure}[h!]
    \centering
    \includegraphics[width=\linewidth]{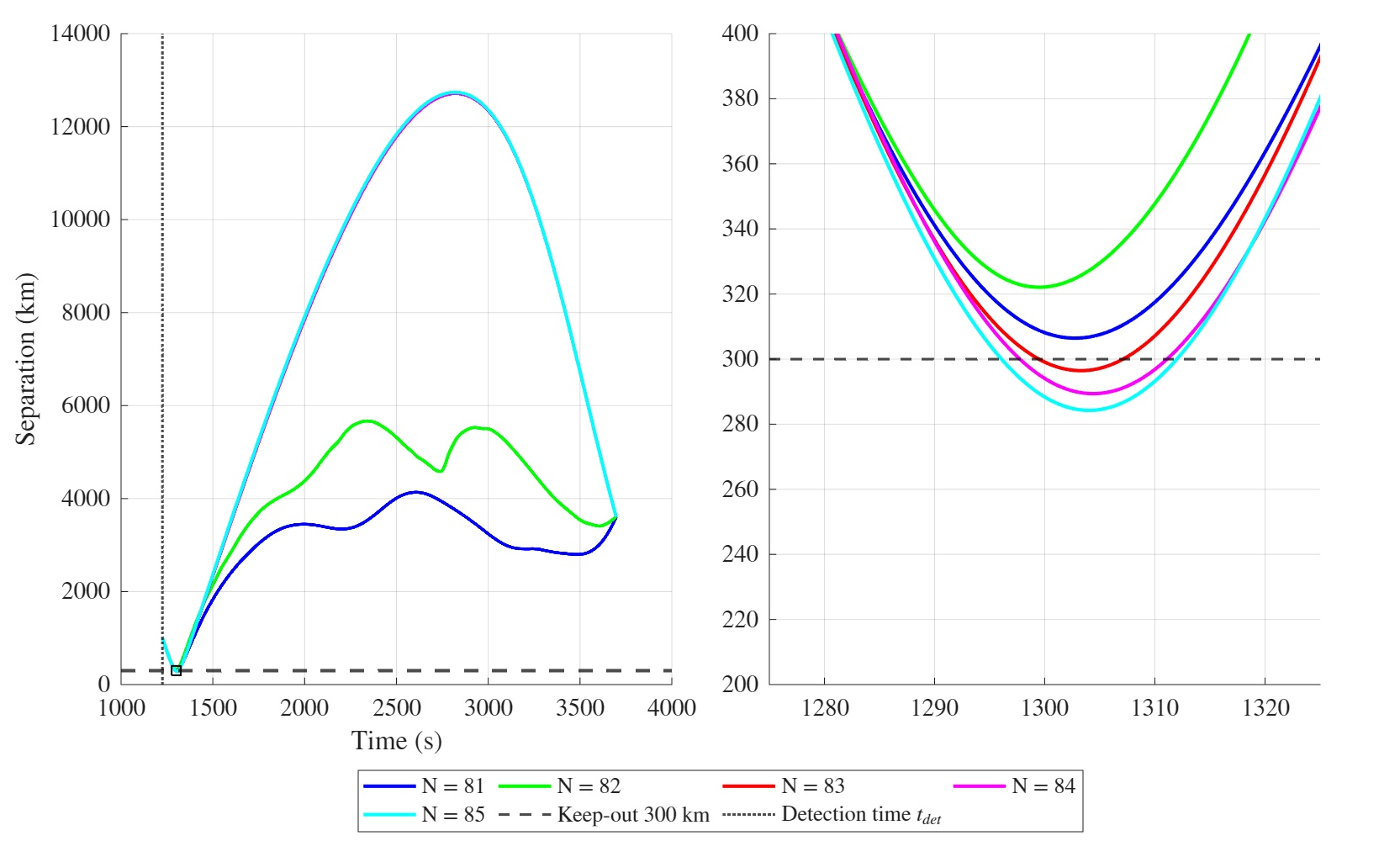}
    \caption{Left: Separation history with finer resolution near $N=80$. Right: zoomed-in version near the keep-out region.}
    \label{fig:zoomed_in_sep}
\end{figure}\\
\autoref{fig:zoomed_in_sep} highlights a clear transition in solver behaviour at $N=81$, which is the first node count to show this unstable behaviour. However, $N=81$ is also the first node size for which the keep-out constraint is actually satisfied by the universal Birkhoff method. This suggests that the observed deterioration in convergence is strongly related to a change in the optimisation problem as sufficient discretisation resolution is reached to satisfy the keep-out constraint. This behaviour was also seen at $N=82$, but surprisingly not at $N=83$ or $N=84$, which showed separation behaviour similar to the original, nominal path. This highlights a crucial fact about the solver behaviour under the universal Birkhoff formulation - the solver instability is not a result of larger $N$, but of another factor. 

Several potential causes for this irregular behaviour were investigated. First, the condition numbers of the Birkhoff matrix $[\mathbf{I, -B}]$ returned a clean $\mathcal{O}(1)$ curve in perfect alignment with \cite{ross_birkhoff_part1}, ruling out increasing ill-conditioning of the Birkhoff discretisation matrix as the cause. Next, the full equality-constraint Jacobian $J_c$, which includes both the Birkhoff collocation defects and the non-linear dynamics, was evaluated at two points via singular value decomposition at the initial guess and the converged (or best-so-far) solution for each $N$, to obtain its condition number $\kappa(J_c)$. To test for a linear independence constraint qualification (LICQ) breakdown, the smallest singular value of the Jacobian rows for constraints near the 300 km bound, $\sigma_{\min}(J_{\text{active}})$, was also computed. Both quantities were insensitive to $N$, with $\kappa(J_c)$ remaining within a narrow band with no monotonic trend, and $\sigma_{\min}(J_{\text{active}})$ being constant at 600 for all tested $N$ values. This rules out both intrinsic conditioning of the discretised NLP and degradation in the independence of the active keep-out constraint gradients as the cause of the $N=90$ failure.

This points to one remaining factor, which is that the misbehaviour must be occurring at some point in the middle of the solve. The issue is that for $N= 81$, the scaled dual infeasibility measure (\verb|inf_du| in IPOPT) fails to decrease towards the solver tolerance even after 10,000 iterations, remaining at magnitudes of $10^6$ to $10^{13}$. This indicates that despite meeting the constraints, the solution is nowhere near a Karush-Kuhn-Tucker (KKT) local optimum since the first-order optimality condition is being violated by orders of magnitude. IPOPT can address this using a restoration phase, where the solver temporarily abandons the primal-dual subproblem and instead minimises constraint violation to find a new iteration closer to feasibility that the regular optimisation can resume from. In this run, the restorations were triggered and successfully re-established feasibility, but the subsequent iterations drove \verb|inf_du| back to large values. 

Therefore, we investigate whether the solver behaviour is related to the placement of the collocation nodes relative to the point of closest approach. Let $t_\text{critical}$ denote the time at which the spacecraft reaches its minimum separation from the debris:
\begin{align}
    t_\text{critical} = \arg \min_t d(t), \quad d(t)=||\mathbf{r}(t) - \mathbf{r}_\text{deb}(t)||
\end{align}
For a discretisation with $N$ nodes, we define the temporal distance of the closest node to this point as $\text{sep}(N)$:
\begin{align}\label{eq:sepn}
    \text{sep}(N) := \min_{k\in \{1, 2, ..., N \}} |t_k^{(N)} - t_\text{critical}|
\end{align}
where $t_k^{(N)}$ is the temporal location of the $k$th node in a discretisation of size $N$. Since the physical closest-time approach is determined by the underlying spacecraft and debris trajectories, $t_\text{critical}$ should be approximately independent of the discretisation, since the finer the resolution, the more the solution should converge to an optimised avoidance trajectory. In practice, we re-evaluate $t_\text{critical}$ for each $N$ value for the purposes of determining the reason for solver divergence. The hypothesis is that a node located very close to $t_\text{critical}$ may produce a more difficult discretised constraint, especially because this is the point where the keep-out constraint is most restrictive and the separation has a stationary point. The results are shown in \autoref{tab:tcrit}.

\begin{table}[h!]
\centering
\caption{The temporal distance of the closest node to $t_\text{critical}$, and the corresponding solver behaviour.}
\label{tab:tcrit}
\begin{tabular}{ccc}
\toprule
$N$ & $\text{sep}(N)$ (s) & Behaviour \\
\midrule
79 & 5.44 & smooth parabolic separation  \\ \rowcolor{gray!15}
80 & 3.44 & smooth parabolic separation  \\ 
81 & 1.52 &  irregular separation \\\rowcolor{gray!15}
82 & 1.32 &  irregular separation  \\ 
83 & 3.60 &  smooth parabolic separation  \\\rowcolor{gray!15}
84 & 6.81 & smooth parabolic separation \\ 
85 & 7.94 & smooth parabolic separation  \\ \rowcolor{gray!15}
\bottomrule
\end{tabular}
\end{table}

As \autoref{tab:tcrit} shows, irregular behaviour occurs when $\text{sep}(N) \lessapprox 2$ sec, while all cases with larger temporal separation exhibit smooth parabolic behaviour. This suggests an empirical threshold of approximately 2 seconds and importantly, the transition is not monotonic with $N$, indicating that the deterioration is associated with the particular LGL node distribution rather than the magnitude of $N$. This also empirically explains the irregular separation at $N=90$ seen in \autoref{fig:node_converge}, which had $\text{sep}(N)=1.21$ seconds.

\subsubsection{Relationship Between $\text{sep}(N)$ and Keep-out Violations}
The pattern in \autoref{tab:tcrit} can be explained by observing that the separation $d(t)$ is a smooth function of time with a genuine local minimum (a turning point) at $t_\text{critical}$, which is determined by the underlying chaser and debris kinematics.

However, the NLP enforces the keep-out constraint only at the discrete LGL nodes. If $t_\text{critical}$ does not coincide with a collocation node, then it necessarily lies strictly between two adjacent nodes (call them node $k$ and node $k+1$). In particular, when both surrounding nodes are optimised to the keep out boundary:
\begin{align}
    d(t_k)=d(t_{k+1})=\rho_{KO}
\end{align}
By Rolle's theorem, it is guaranteed that there exists a stationary point between them. Since $t_\text{critical}$ is a strict local minimum, it follows that:
\begin{align}
    d(t_\text{critical}) < d(t_k)=d(t_{k+1})=\rho_{KO}
\end{align}
and hence a continuous-time keep-out violation is unavoidable. Using this idea, we can then derive an expression that predicts the keep-out violation. Note that:
\begin{align}
    d(t) =||\mathbf{r}_\text{rel}(t)||=\sqrt{\mathbf{r}_\text{rel}(t)\cdot \mathbf{r}_\text{rel}(t)}\implies\dot{d}(t) = \frac{\mathbf{r}_{\mathrm{rel}}(t)^\mathsf{T}\mathbf{v}_{\mathrm{rel}}(t)}{||\mathbf{r}_{\mathrm{rel}}(t)||} = \hat{\mathbf{r}}_\text{rel}\cdot \mathbf{v}_\text{rel}
\end{align}
and since it has a local minimum at $t_\text{critical}$:
\begin{align}
    \dot{d}(t_\text{critical}) = 0 \implies \hat{\mathbf{r}}_\text{rel}(t_\text{critical})\cdot \mathbf{v}_\text{rel}(t_\text{critical}) = 0.
\end{align}
Decomposing $\mathbf{v}_\text{rel}$ into components parallel and perpendicular to $\hat{\mathbf{r}}_\text{rel}$,
\begin{align}
    \mathbf{v}_\text{rel} =
    \underbrace{\left(\hat{\mathbf{r}}_\text{rel}\cdot\mathbf{v}_\text{rel}\right)\hat{\mathbf{r}}_\text{rel}}_{\mathbf{v}_{\text{rel},\parallel}}
    +
    \underbrace{\left[\mathbf{v}_\text{rel} - \left(\hat{\mathbf{r}}_\text{rel}\cdot\mathbf{v}_\text{rel}\right)\hat{\mathbf{r}}_\text{rel}\right]}_{\mathbf{v}_{\text{rel},\perp}},
\end{align}
the vanishing dot product above gives $\mathbf{v}_{\text{rel},\parallel}(t_\text{critical}) = \mathbf{0}$, and therefore:
\begin{align}
    \mathbf{v}_\text{rel}(t_\text{critical}) = \mathbf{v}_{\text{rel},\perp}(t_\text{critical}) =: v_\perp\,\hat{\mathbf{e}}_\perp,
\end{align}
i.e.\ the relative velocity at closest approach is purely transverse, with magnitude $v_\perp := |\mathbf{v}_\text{rel}(t_\text{critical})|$. Over a short window around $t_\text{critical}$, the relative motion can be approximated as straight-line at constant velocity, giving the classical closest-approach relation:
\begin{align}
    d(t)^2 \approx d_{\min}^2 + v_\perp^2(t - t_\text{critical})^2. \label{eq:closest_approach}
\end{align}

Now, we want to derive an expression for $d_{min}$, the minimum separation (which occurs at $t_\text{critical}$). Since we know that inter-node violations occur when the nearby nodes are optimised to $\rho_{KO}$, the node closest to $t_\text{critical}$ (call it node $k$) is optimised to exactly $d(t_k)=\rho_{KO}$. Substituting $t=t_k$, $d(t_k)=\rho_\text{KO}$, and $|t_k - t_\text{critical}| = \text{sep}(N)$ into \eqref{eq:closest_approach} and solving for the true minimum separation at the turning point gives:
\begin{align}
    d_{\min} &= \sqrt{\rho_{KO}^2 - v_\perp^2\,\text{sep}(N)^2}\label{eq:dmin}\\
   \therefore \text{violation}(N) &= \rho_{KO} - d_{min}\nonumber \\
    &=\rho_{KO} -\sqrt{\rho_{KO}^2 - v_\perp^2\,\text{sep}(N)^2} \nonumber \\
    &\approx  
    \frac{v_\perp^2\,\text{sep}(N)^2}{2\rho_{KO}}, \label{eq:violation_pred}
\end{align}
where the final expression follows from a first-order Taylor expansion of $\sqrt{1-x}$ about $x=0$, with $x=v_\perp^2\text{sep}(N)^2/\rho_\text{KO}^2$. This approximation is valid when $v_\perp\text{sep}(N)\ll\rho_\text{KO}$. Note that \autoref{eq:dmin} predicts that the further the nearest collocation node is from the true closest-approach time (ie. higher $\text{sep}(N)$), the smaller the minimum separation $d_{min}$ is (ie. the spacecraft and debris are closer), and hence the larger inter-node violation by \autoref{eq:violation_pred}, which is exactly what was observed.

Reliable local convergence of interior-point methods depends on appropriate regularity conditions, such as the LICQ, strict complementarity, and second-order sufficient conditions \cite[Sec. 19.8]{nocedal_wright_2006}. As \autoref{eq:violation_pred} shows, a smaller $\text{sep}(N)$ produces a smaller predicted inter-node violation, bringing the constraint closer to being marginally active. In this regime, the associated Lagrange multiplier may approach zero, weakening strict complementarity and potentially making the primal-dual system more numerically sensitive. This provides a possible explanation for the irregular behaviour and repeated restoration-phase cycling observed previously.

\subsubsection{Implications for Adaptive Node Placement}

The adaptive method, in contrast to the universal Birkhoff results, achieves a locally optimal solution much faster than the constraint-violating $N=80$ flat-PDF replan, and with a lower node count at $N=50$. Additionally, its violation of 0.70 km is an improvement on the 2.72 km violation generated by the universal formulation with $N=80$ - this can be attributed to the closer spacing of the adaptive nodes, which reduces the temporal interval over which violations from the constraint boundary can occur. Therefore, the results of the adaptive method highlight an important limitation of the universal formulation for problems with localised constraint activity - increasing the number of nodes does not necessarily improve continuous-time feasibility, since the redistribution of nodes can produce unfavourable placements relative to the critical region while still leaving substantial inter-node gaps. The adaptive discretisation instead focuses the resolution in the region of greatest trajectory activity, which substantially reduces the inter-node violation at lower node counts. 

However, this clustering does not inherently eliminate the numerical difficulties associated with nodes located very close to the point of closest approach - indeed, increasing the adaptive node count can reproduce the same irregular behaviour (for example, using $N=80$ nodes with the adaptive method also produces irregular behaviour). This creates competing effects, where clustering the nodes gives smaller inter-node gaps, but also potentially smaller $\text{sep}(N)$ as the nodes get closer to $t_{critical}$ which introduces the numerical difficulties described previously. Therefore, although the adaptive method cannot guarantee solver stability for arbitrarily large node counts, its principal advantage is that it can achieve substantially improved continuous-time constraint satisfaction at a lower node count and computational cost. In this sense, the adaptive method provides a more efficient discretisation of the trajectory, allowing localised constraint activity to be resolved before the numerical difficulties associated with excessive proximity to $t_\text{critical}$ become dominant.

Another practical consequence of increasing nodes is wall time which increases nonlinearly with $N$, from 6.8 seconds at $N=50$ to 113.3 seconds at $N=80$, as shown in \autoref{fig:wall_time}.
\begin{figure}[h!]
    \centering
    \includegraphics[width=0.55\linewidth]{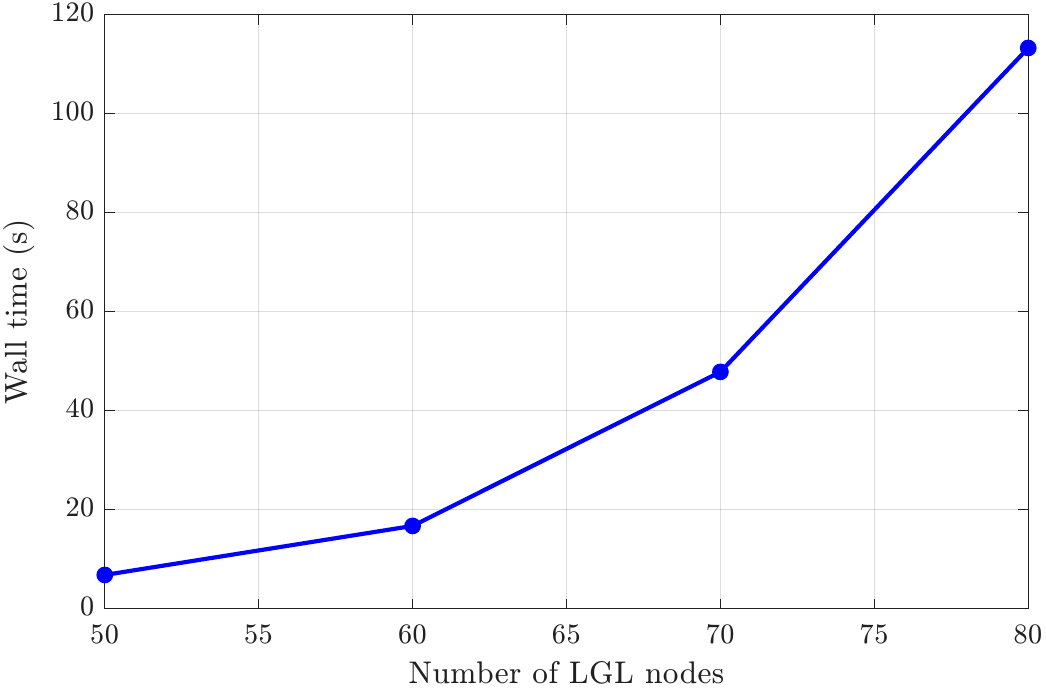}
    \caption{Cumulative wall time as a function of the number of nodes $N$ with the universal method.}
    \label{fig:wall_time}
\end{figure}\\
In comparison, the adaptive method with $N=50$ nodes was solved in 18.8 seconds, which far outperforms $N=80$ while also being more accurate in terms of inter-node violations. Hence, having fewer nodes to optimise for not only provides both numerical advantages but also computational advantages.

\subsection{Comparison to MPC}
MPC is a well-established existing method for collision avoidance control \cite{park, bashnick, xwang}, so we compare the performance of the adaptive node algorithm to MPC. For this paper, a receding-horizon terminal MPC, where the algorithm is free to choose its own path as long as terminal constraints are met, was used. 

To ensure a fair comparison, the total flight time of the MPC replanning manoeuvre was fixed to $t_{replan}$, matching that of the standard and adaptive Birkhoff methods so that the comparison is isolated to differences in trajectory-shaping. The MPC replans at every timestep by solving a finite-horizon optimal control problem over a prediction horizon $T_p$, which is clamped near the end of the manoeuvre to avoid extending beyond the fixed terminal time. The prediction horizon is discretized into $N_{mpc} = \frac{T_p}{T_s}$ steps of length $T_s$ seconds. At each replanning instance, only the first control input of the solved sequence is applied before the horizon shifts forward and the problem is re-solved with updated state information. The objective function per replanning instance is:
\begin{equation}\label{eq:opt_mpc}
\begin{aligned}
& \underset{\mathbf{u}}{\text{min}}
&  J &= \underbrace{\sum_{k=1}^{N_{mpc}} \frac{T_{p}}{N_\text{mpc}} ||\mathbf{u_k}||}_\text{fuel cost} 
       + \underbrace{\alpha_\text{KO}\sum_{k=1}^{N_\text{mpc}+1}\left[\text{max}\left(0, \rho_\text{KO}^2-||\mathbf{r}_k - \mathbf{r_{d,k}}||^2 \right)\right]^2}_\text{soft keep-out penalty} \\
& & & \quad + \underbrace{\alpha_k||\mathbf{r_{N_\text{mpc}+1}} - \mathbf{r_t}||^2}_\text{scaled terminal cost} \\
& \text{subject to}
& \mathbf{x}_{k+1} &= x_{k} + \int_{t_k}^{t_{k+1}} f(\mathbf{x}_k, \mathbf{u}_k) dt\\
& & ||\mathbf{u}_k|| &\leq u_\text{max}\\
& & \mathbf{x}_1 &= \mathbf{x_0}\\
& & \mathbf{v}_{{N}_\text{mpc}+1} ^T\mathbf{v}_\text{target} &> 0
\end{aligned}
\end{equation}
where $\alpha_\text{KO}$ and $\alpha_k$ are tuning parameters for the soft keep-out penalty and scaled terminal cost respectively, and the discretization step $\frac{T_p}{N_\text{mpc}}$ is nominally equal to $T_s$, except near the end of the manoeuvre where the horizon $T_p$ is clamped, temporarily reducing the step length. While $\alpha_\text{KO}$ is constant across each solve, the terminal weight $\alpha_k$ increases as the horizon approaches the fixed terminal time, progressively prioritizing convergence to the target as less time remains to correct any residual error. Note also that we sum up to $N_\text{mpc}+1$ for state-related terms as there are $N_\text{mpc}+1$ state nodes, but we only sum up to $N_\text{mpc}$ for the fuel cost as there are $N_\text{mpc}$ control nodes (we do not apply a control at the last state). The last constraint $\mathbf{v}_{{N}_\text{mpc}+1} ^T\mathbf{v}_\text{target} > 0$ is added to enforce alignment between the terminal spacecraft velocity and the target velocity, which prevents convergence to solutions that use unfavourable phasing geometry. Additionally, the method of using a soft keep-out penalty in the objective was chosen over setting it as a constraint, as enforcing it directly was found to produce persistent solver failures. Specifically, IPOPT repeatedly failed to converge, which was diagnosed as the solution grazing the constraint boundary of $\rho_{KO}$ within numerical tolerance during the replanning process.

To determine a representative MPC solution to compare the standard and adaptive results against, the horizon length $N_{mpc}$ was swept from 40 to 80 steps to identify the best-performing MPC configuration, as shown in \autoref{fig:compare_mpc}.

\begin{figure}[h!]
    \centering
    \includegraphics[width=0.65\linewidth]{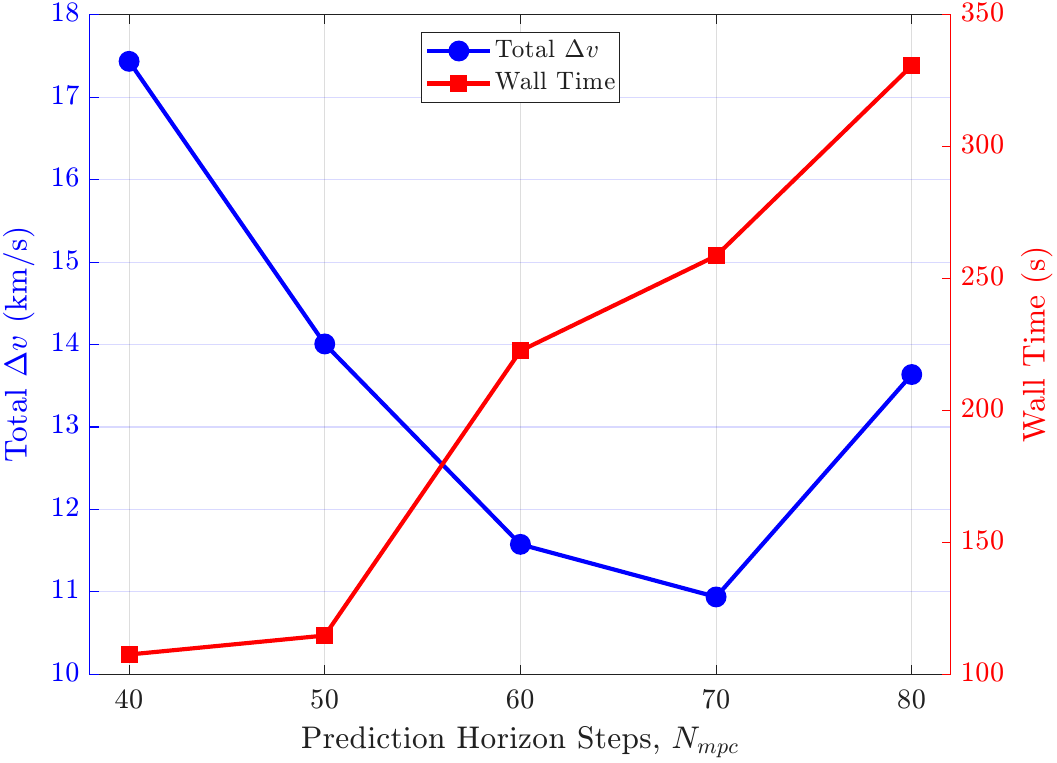}
    \caption{Effects of increasing horizon steps on $\Delta v$ and wall time}
    \label{fig:compare_mpc}
\end{figure}
As \autoref{fig:compare_mpc} shows, the $\Delta v$ initially decreases with prediction horizon steps, with a $37.4\%$ reduction in $\Delta v$ from $17.4$ km/s with $N_{mpc}=40$ to $\Delta v=10.9$ km/s with $N_{mpc}=70$. However, $N_{mpc}=80 $ instead produces a qualitatively different trajectory and higher $\Delta v$, which is consistent with convergence to a different local minimum. This highlights how MPC performance is not monotonically guaranteed with horizon length for this non-convex problem. Hence, we use the tuned values of $N_{mpc} = 70$ horizon steps per solve, with $T_p=1750$ and $T_s=25$ seconds. The results of the MPC algorithm compared to the standard and adaptive Birkhoff methods is shown in \autoref{tab:comp}, and the resulting separation history is then shown in \autoref{fig:mpc_sep} where smaller separation violations are desired.

\begin{table}[h!]
\centering
\caption{Performance comparison of the standard (universal) LGL, adaptive LGL, and MPC algorithms}
\label{tab:comp}

\begin{tabular}{lccc}
\toprule
\rowcolor{white} 
  & \makecell{\textbf{Universal Birkhoff} \\ \textbf{(Phase 2a)}} & \makecell{\textbf{Adaptive Birkhoff} \\ \textbf{(Phase 2b)}} & \textbf{MPC} \\
\midrule
Total $\Delta v$ (km/s)        & 1.793 & 5.950 & 10.938 \\
\rowcolor{gray!15}
Total Wall Time (s)            & 6.8   & 18.8   & 258.7 \\
Separation Violation (km)        & 49.80   & 0.70   & 1.07 \\
\bottomrule
\end{tabular}
\end{table}

\begin{figure}[h!]
    \centering
    \includegraphics[width=\linewidth]{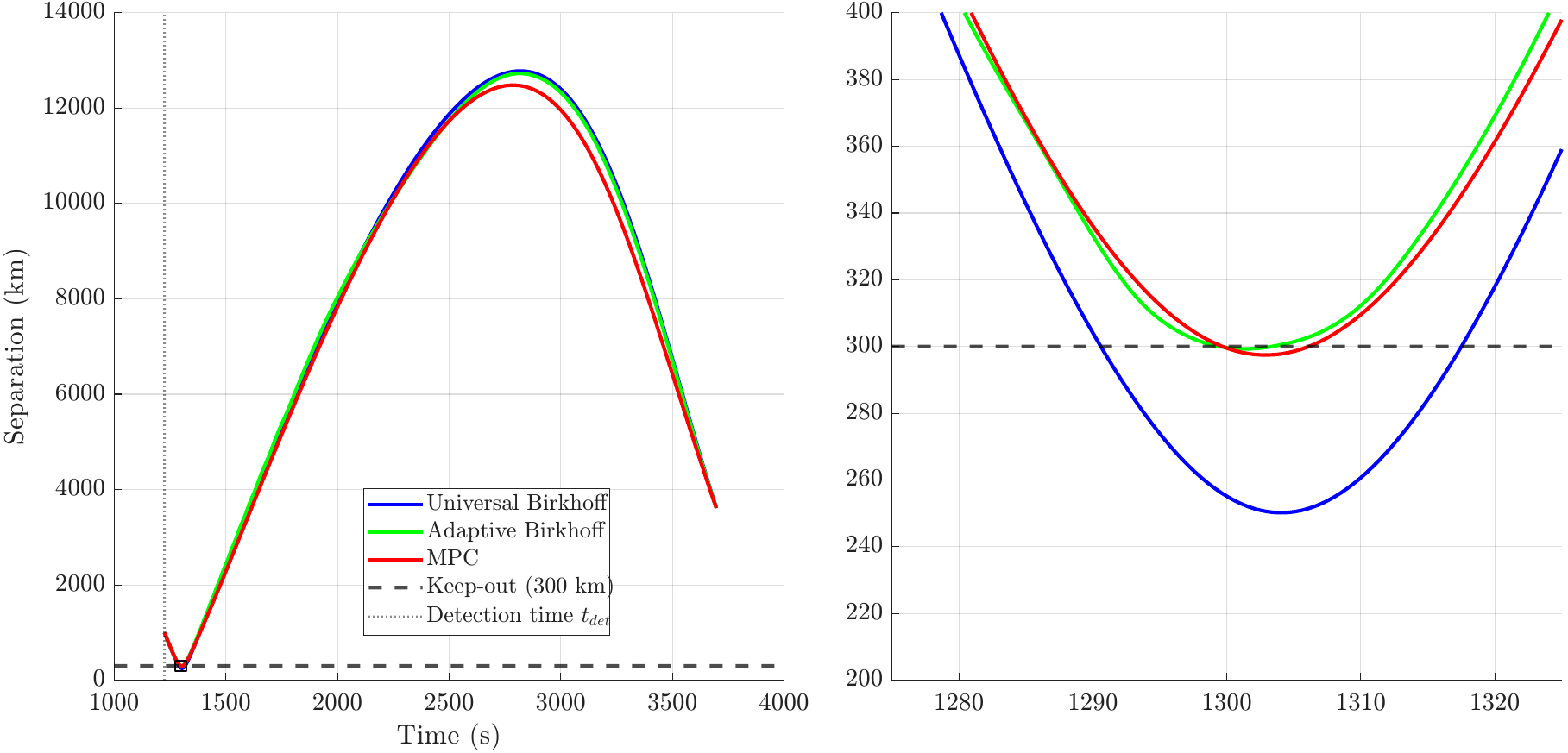}
    \caption{Left: The separation history of the MPC algorithm, compared to the Birkhoff histories. Right: zoomed-in version near the keep-out region.}
    \label{fig:mpc_sep}
\end{figure}

As \autoref{fig:mpc_sep} shows, the MPC algorithm achieves a qualitatively similar avoidance manoeuvre to the standard and adaptive Birkhoff methods. Initially, a high $\Delta v$ appears to make the MPC solution suboptimal, but like before, we argue that separation violation is the more critical metric compared to $\Delta v$ and wall time, since a violation of mission constraints carries greater consequence than added fuel expenditure or computation time. The MPC algorithm exhibits a slight keep-out constraint violation of $1.07$ km, which is comparable in scale to the adaptive method residual of 0.70 km, and substantially smaller than the universal Birkhoff method residual of 49.80 km established in the previous section. This small residual is a direct consequence of the soft-penalty formulation previously discussed, however, this trade-off is accepted in exchange for reliable solver convergence across the replanning horizon. 

Overall, using $N_{mpc}=70$ as the representative MPC configuration, it is clear that the adaptive Birkhoff method can achieve similar trajectories (in the locally-optimal sense) to MPC algorithms, while also offering improved computational efficiency (a reduction of 92.7\% in wall time from 258.7 sec by MPC to 18.8 sec by the adaptive Birkhoff method). Regarding fuel consumption, because MPC uses a prediction horizon, its visibility into the future is restricted, and its control authority over $t_\text{replan}$ accordingly yields a less globally efficient solution than the adaptive node formulation, which has full-horizon knowledge by construction. Therefore, the $\Delta v$ gap is attributable primarily to finite-horizon effects (ie. horizon myopia), rather than any limitation of the MPC implementation.

\section{Conclusion}
This paper presented an adaptive node distribution strategy for Birkhoff pseudospectral trajectory optimisation, in which collocation nodes are placed according to a probability density function derived from the thrust magnitude profile of the nominal solution. This was applied to a debris collision avoidance case study, where it was found that the proposed method was shown to produce a genuinely feasible avoidance trajectory, reducing the inter-node keep-out violation from $16.6\%$ from the standard LGL collocation to a comparatively negligible $0.23\%$, demonstrating the benefit of concentrating collocation resolution in regions of significant trajectory activity rather than relying only on uniform refinement of the LGL node set.

The results also revealed an important limitation of LGL node refinement, where increasing the number of collocation nodes did not consistently improve continuous-time feasibility. Instead, beyond a critical node count the changes in node placement from a higher $N$ could lead to irregular NLP solver behaviour. The adaptive formulation is also not immune to this behaviour with higher node counts also displaying irregular behaviour, but the principle advantage of the adaptive framework is in achieving substantially greater resolution of localised constraint activity with fewer nodes, which in turn reduces continuous-time constraint violations before encountering the irregular behaviour from these numerical difficulties.

Finally, when compared to a Model Predictive Control baseline, the adaptive method was able to achieve similar trajectories with a $92.7\%$ reduction in wall time, confirming that full-horizon pseudospectral optimisation outperforms reactive planning for this class of problem. These results demonstrate the potential of full-horizon pseudospectral optimisation to provide more efficient trajectory solutions than reactive planning for this class of debris collision avoidance problem, while also highlighting the importance of node placement and discretisation strategy when enforcing localised constraints.

The proposed framework is not limited to collision avoidance, and future work may involve applying this adaptive node method to a variety of other contexts. Because the adaptive node transformation is independent of the problem dynamics, it is directly applicable to any trajectory optimisation scenario where dynamic activity is localised in time, such as atmospheric entry and hypersonic re-entry, powered descent and landing, and low-thrust orbit raising. In each case, the key advantage of the Birkhoff formulation is that arbitrary node distributions are supported without sacrificing spectral accuracy or costate recovery, making the adaptive method a natural foundation for these extensions. In this way, the proposed method offers a computationally efficient alternative to iterative mesh refinement across a broad class of trajectory optimisation problems.

\appendix

\subsection*{Acknowledgements}
\label{acknowledgements}

This research was supported by the Commonwealth through an Australian Government Research Training Program Scholarship [DOI: \url{https://doi.org/10.82133/C42F-K220}].

\subsection*{Declaration of competing interest}

The authors have no competing interests to declare that are relevant to
the content of this article.

\section*{References}

\bibliographystyle{astrobib}
\bibliography{refs}

\end{document}

%% file: glossary.tex
\newacronym{lgl}{LGL}{Legendre-Gauss-Lobatto}
\newacronym{pdf}{PDF}{probability density function}
\newacronym{nlp}{NLP}{nonlinear programming}
\newacronym{cdf}{CDF}{cumulative distribution function}
\newacronym{psim}{PSIM}{pseudospectral integration matrix}
\newacronym{psdm}{PSDM}{pseudospectral differentiation matrix}
\newacronym{cmp}{CMP}{covector mapping principle}
\newacronym{leo}{LEO}{low-Earth orbit}
\newacronym{mpc}{MPC}{Model Predictive Control}
\newacronym{ipopt}{IPOPT}{Interior Point OPTimizer}